\newtheorem{theorem}{Theorem}[section]
\newtheorem{lemma}{Lemma}[section]
\newtheorem{conjecture}{Conjecture}[section]%
\newcommand{\keywords}[1]{\textbf{Keywords:} #1}
\newcommand{\msc}[1]{\textbf{MSC Classification:} #1}
\begin{document}
\title{Mahler-type volume inequality for convex bodies with tetrahedral symmetry}

\author{Arkadiy Aliev\footnote{HSE University, Moscow, Russia, \texttt{\href{mailto:arkadiy.aliev@gmail.com}{arkadiy.aliev@gmail.com}}}}
\date{}
\maketitle

\begin{abstract}
Let \( K \) be a convex body in \( \mathbb{R}^n \). We denote the volume of \( K \) by \( |K| \), and the polar body of its difference body \( K - K \) by \( (K - K)^{\circ} \). We provide a new proof of the well-known estimate
\[
|K||(K - K)^{\circ}| \geq \frac{3}{2}
\]
for \( K \subset \mathbb{R}^2 \), with equality attained for a triangle. For \( K \subset \mathbb{R}^3 \) with tetrahedral symmetry, we prove that
\[
|K| |(K - K)^{\circ}| \geq \frac{2}{3},
\]
with equality attained for a tetrahedron.

\end{abstract}

 \keywords{Mahler  conjecture, volume product}

 \msc{52A10, 52A15, 52A38, 52A40}

\section{Introduction}
     We call a convex compact set $K\subset\mathbb{R}^{n}$ with non-empty interior $\mathrm{int}K$ a convex body. The volume of $K$ is denoted by $\vert K \vert$. The difference body of $K$ is defined by
     
     $$
        K-K = \{x-y\text{ }\vert\text{ }x,y\in K\}.
     $$ 
 We say that $K$ is centrally symmetric if $K=-K$. We denote the family of all convex bodies in $\mathbb{R}^{n}$ by $\mathcal{K}^{n}$ and the family of all centrally symmetric convex bodies in $\mathbb{R}^{n}$ by $\mathcal{K}_{0}^{n}$. The polar body of $K$ is defined as
    $$
        K^{\circ}:=\{x\in\mathbb{R}^{n} \text{ }\vert\text{ } \forall y\in K\text{ } \langle y,x \rangle\leq 1 \}.
    $$
The volume product of $K \in \mathcal{K}^{n}$ is defined as 
$$\mathcal{P}(K) = \min_{z\in K} \vert K\vert \vert (K - z)^{\circ}\vert.$$ The well known Blaschke--Santal\'o inequality \cite[p.548]{bruhn} gives the upper bound $\mathcal{P}(K)\leq \mathcal{P}(B_{2}^{n})$, where we denote $B_{p}^{n} = \{x\in \mathbb{R}^{n}:\| x\|_{p}\leq 1\}$. The lower bound for $K\in \mathcal{K}^{n}_{0}$ is the famous Mahler's conjecture.

    \begin{conjecture}[Symmetric Mahler’s conjecture]
        \label{mahler}
        For a convex body $K\in \mathcal{K}^{n}_{0}$.
        $$
        \mathcal{P}(K) \geq  \mathcal{P}(B_{\infty}^{n}) = \frac{4^{n}}{n!}.
        $$
    \end{conjecture}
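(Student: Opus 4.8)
\medskip
\noindent\textbf{Proof strategy (proposal).} The displayed inequality is the symmetric Mahler conjecture, which is open in general: it is known unconditionally only for $n\le 3$ (the planar case going back to Mahler, the case $n=3$ to Iriyeh and Shibata) and for special classes such as unconditional bodies, zonoids, and bodies with a large symmetry group, while for general $n$ it is established only up to an exponential factor by the Bourgain--Milman theorem. I therefore do not expect to prove it in full; what follows is the line of attack that the low-dimensional and tetrahedrally-symmetric results of this paper are meant to realize, together with the place where it is expected to stall.

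The plan is to route the symmetric problem through the difference body of a \emph{general} convex body. For $K\in\mathcal{K}^{n}$ one has $K-K\in\mathcal{K}_{0}^{n}$ and $(K-K)^{\circ}=\{x : h_{K}(x)+h_{K}(-x)\le 1\}$, the unit sublevel set of the width function of $K$; moreover when $K$ is already centrally symmetric $(K-K)^{\circ}=\tfrac12 K^{\circ}$, so a sharp lower bound for $|K|\,|(K-K)^{\circ}|$ over $\mathcal{K}^{n}$ specializes to a bound for $\mathcal{P}$ on $\mathcal{K}_{0}^{n}$. Concretely I would first prove, in dimensions $2$ and $3$, the sharp ``difference-body'' inequality $|K|\,|(K-K)^{\circ}|\ge \mathcal{P}(\Delta_{n}-\Delta_{n})/\binom{2n}{n}$ with the simplex as extremizer, where the right-hand side equals $3/2$ for the hexagon $\Delta_{2}-\Delta_{2}$ and $2/3$ for the cuboctahedron $\Delta_{3}-\Delta_{3}$. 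The steps would be: (i) reduce $K$ to a convenient affine position, e.g. John position of $K-K$ or position of maximal inscribed simplex; (ii) use the facet structure of the polytope $K-K$ to decompose $(K-K)^{\circ}$ into the cones over its facets and bound $|(K-K)^{\circ}|$ from below by the corresponding sum, thereby reducing the problem to finitely many parameters; (iii) deform $K$ by a Rogers--Shephard / shadow-system motion that does not increase $|K|\,|(K-K)^{\circ}|$ and terminates at a simplex, where the constant is computed directly. In the $3$-dimensional statement it is the finite tetrahedral symmetry group that forces the body into an essentially low-parameter family and makes step (ii) yield a genuinely finite, checkable optimization.

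The main obstacle is step (iii). No symmetrization is known that decreases the volume product monotonically toward the simplex while keeping the body in the relevant class, so the deformation must be constructed by hand; it can be made to work using rigidity that is special to the plane, and in $\mathbb{R}^{3}$ only because the imposed symmetry collapses the shape to one or two free parameters. The deeper reason the argument does not pass to general $n$ is that the extremal set is large --- in the symmetric problem the cube, the cross-polytope, and all Hanner polytopes are conjecturally extremal --- so any induction on dimension or on the number of facets has to cope with a whole family of minimizers rather than an isolated one, and a rich symmetry hypothesis is precisely what reduces that family to something tractable. I expect the construction of a product-monotone deformation toward the simplex, valid within the prescribed symmetry class, to be the crux of the proof.
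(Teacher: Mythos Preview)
The statement you are asked to prove is labelled \emph{Conjecture} in the paper and is treated only as background: the paper recalls that it is known for $n\le 3$ and for special classes, but makes no attempt to prove it. The paper's own contributions are Theorems~\ref{plane} and~\ref{space}, which address Conjecture~\ref{main-conj} (Makai's conjecture), not Conjecture~\ref{mahler}. So there is no ``paper's proof'' to compare against; your proposal is a strategy toward an open problem, and should be judged as such.

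That said, there is a genuine quantitative gap in your routing argument. You write that for centrally symmetric $K$ one has $(K-K)^{\circ}=\tfrac12 K^{\circ}$, and hence a sharp lower bound for $|K|\,|(K-K)^{\circ}|$ over all of $\mathcal{K}^{n}$ ``specializes'' to a bound for $\mathcal{P}$ on $\mathcal{K}_{0}^{n}$. This is true, but the bound it specializes to is \emph{not} the Mahler bound. Indeed, for $K=-K$ one has $K-K=2K$, so $|K|\,|(K-K)^{\circ}|=2^{-n}\mathcal{P}(K)$; the sharp Makai inequality $|K|\,|(K-K)^{\circ}|\ge (n+1)/n!$ therefore yields only
\[
\mathcal{P}(K)\ \ge\ \frac{2^{n}(n+1)}{n!},
\]
which is strictly weaker than $4^{n}/n!$ for every $n\ge 2$ (the ratio is $2^{n}/(n+1)$). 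Equivalently, the extremizer of the Makai functional is the simplex, and the simplex is \emph{not} centrally symmetric, so specializing to $\mathcal{K}_{0}^{n}$ cannot retain sharpness. Your programme, even if carried out in full in every dimension, would establish Conjecture~\ref{main-conj} but would leave Conjecture~\ref{mahler} untouched beyond the suboptimal corollary above.

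A secondary remark: the mechanism you sketch (John position, shadow-system deformation toward the simplex) is not what the paper actually does for its Theorems~\ref{plane} and~\ref{space}. The paper instead partitions $(K-K)^{\circ}$ into cones using an inscribed affine regular hexagon (in the plane) or the four diagonal hyperplanes $(\pm1,\pm1,1)^{\perp}$ (in space), bounds the pieces via Zang's lemma and the planar Mahler/Eggleston inequalities applied to sections and projections, and finishes with an elementary case split. No monotone deformation is used.
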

    It was fully resolved for $n=2$ \cite{ma1} and $n=3$ \cite{mahler3d}. In the case of general dimension, the conjecture has been proven for some families of convex bodies including unconditional convex boides \cite{symmahl}, zonoids \cite{zonoids}, for bodies sufficiently close to the unit cube in the Banach-Mazur distance \cite{petrov}. The best known lower bound is $\mathcal{P}(K)\geq \frac{\pi^{n}}{n!}$ due to Kuperberg \cite{kup}.

    Symmetric Mahler's conjecture plays a central role in the geometry of numbers and estimating of densities of non-separable lattice arrangements. We say that $\Lambda\subset \mathbb{R}^{n}$ is a lattice if $\Lambda = A\mathbb{Z}^n$ for some $A\in GL(n,\mathbb{R})$. We define $\det(\Lambda) = |det A|.$ A set of translates of $K$ by a lattice is defined as $\Lambda + K = \{x+y \text{ } \vert\text{ } x\in \Lambda,y\in K\}$ and its density is defined as $d(\Lambda, K) := \frac{\vert K\vert}{d(\Lambda)}.$  

    We say that $\Lambda +K$ is non-separable if each affine $(n-1)$-subspace in $\mathbb{R}^{n}$ meets $x+K$ for some $x\in\Lambda$. Non-separable set of translates by a lattice were introduced by  L. Fejes T\'oth in \cite{Mak-1974}, see also \cite{Mak-1978, Mak2016}. For a convex body $K\subset \mathbb{R}^{n}$ we define 
        $$d_{n}(K) := \min\{d(\Lambda,K)\text{ }\vert\text{ } \Lambda+K \text{ is non-separable} \}.$$ 
    The lower bounds for $d_{n}(K)$ (see \cite{aliev, Mak-1978, Mak2016}) are the polar dual results to the famous Minkowski’s fundamental theorem (see \cite{shimura} for the explanation of the duality), which states that any centrally symmetric convex body $K\in \mathcal{K}^{n}_{0}$ without nonzero integer points in its interior has volume non greater than $2^{n}$. It was shown in \cite{Mak-1978} that $d_{n}(K)\geq \frac{1}{4^{n}}\mathcal{P}(K)$ for $K\in \mathcal{K}^{n}_{0}$. Thus, $d_{2}(K)\geq \frac{1}{2}$ and $d_{3}(K)\geq \frac{1}{6}$. Moreover, if $K$ is a cross-polytope, then $d_{n}(K)=\frac{1}{n!}$, which shows that the Mahler conjecture implies tight lower bounds for $d_{n}$ in the centrally symmetric case. The classical non-symmetric extension of the Mahler conjecture (see \cite{ma2, mahlernonsym}) however does not imply tight lower bounds for $d_{n}$ in the general case.

    \begin{conjecture}[Non-symmetric Mahler’s conjecture]
        \label{mahler_nonsym}
        For a convex body $K\in \mathcal{K}^{n}$.
        $$
        \mathcal{P}(K) \geq  \mathcal{P}(\Delta_{n}) =\frac{(n+1)^{n+1}}{(n!)^{2}},
        $$
        where $\Delta_{n} \in \mathcal{K}^{n}$ is a regular simplex.
    \end{conjecture}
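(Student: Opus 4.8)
The statement is the non-symmetric Mahler conjecture in full generality, which is a central open problem, rigorously established only in low dimensions (for $n=2$ by Mahler himself, with the simplex extremal). Consequently, any honest plan must either restrict the dimension or impose symmetry, exactly as the theorems of this paper do. The overall strategy I would pursue is the variational one: establish that a volume-product minimizer exists, that it is a polytope, and that symmetrization forces it to be a simplex, whose volume product equals $(n+1)^{n+1}/(n!)^2$ by direct computation.

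\textbf{Step 1 (normalization and existence).} Since $\mathcal{P}$ is invariant under invertible affine maps and continuous for the Hausdorff metric, and the space of convex bodies is compact after an affine normalization (e.g.\ John position), a minimizer $K_0$ exists. By the regularity theory for volume-product minimizers (in the spirit of Reisner--Schütt--Werner), one expects to reduce to the case where $K_0$ is a polytope, after which I would translate so that the Santaló point of $K_0$ is the origin, giving $\mathcal{P}(K_0)=|K_0|\,|K_0^\circ|$.

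\textbf{Step 2 (shadow-system reduction and the difference-body bound).} I would deform $K_0$ by volume-preserving shadow systems in the sense of Rogers--Shephard. The key analytic input, due to Campi--Gronchi and Meyer--Reisner, is that the polar volume is convex along a shadow system; this convexity is the engine that reduces an extremal polytope to one of minimal combinatorial complexity, driving $K_0$ toward a simplex. Here the difference-body inequality of the present paper enters as the organizing principle: because $K-z\subseteq K-K$ for every $z\in\operatorname{int}K$, one has $(K-K)^\circ\subseteq(K-z)^\circ$ and hence
\[
|K|\,|(K-K)^\circ|\le \mathcal{P}(K),
\]
so the sharp evaluation of $|K|\,|(K-K)^\circ|$ under simplicial symmetry both isolates the simplex as the natural extremizer and furnishes an unconditional lower bound for $\mathcal{P}$.

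\textbf{Step 3 (the simplex and the main obstacle).} Once the minimizer is shown to be a simplex, the value $\mathcal{P}(\Delta_n)=(n+1)^{n+1}/(n!)^2$ follows from an explicit computation with the regular simplex and its polar, the answer being affine-invariant. The main obstacle is precisely the point at which every general-dimension attack stalls: proving that the shadow-system reduction terminates at a simplex rather than at some other critical polytope, and -- because the difference-body bound $|K|\,|(K-K)^\circ|$ is strictly smaller than $\mathcal{P}(\Delta_n)$ (already $\tfrac{3}{2}<\tfrac{27}{4}$ for $n=2$) -- supplying the extra sharpness that the difference body alone cannot deliver. I therefore expect the genuine reach of this method to be low dimensions or a symmetry hypothesis, exactly the regime in which the theorems of this paper are established, with the unrestricted conjecture remaining open.
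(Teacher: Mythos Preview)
The statement you are addressing is labeled \emph{Conjecture} in the paper and is not proved there; the paper quotes it only as background, alongside the symmetric Mahler conjecture and the Makai conjecture. The paper's actual theorems concern the Makai quantity $|K|\,|(K-K)^\circ|$ (Conjecture~\ref{main-conj}), not $\mathcal{P}(K)$, so there is no ``paper's own proof'' to compare your proposal against.

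Your write-up is honest about this: you correctly flag the problem as open for $n\ge 3$, sketch the standard existence/shadow-system/polytope-reduction program, and then acknowledge the decisive gap---showing the shadow-system reduction terminates at a simplex. That gap is real and is exactly why no general proof exists. One clarification: you present the inequality $|K|\,|(K-K)^\circ|\le \mathcal{P}(K)$ and the paper's sharp bounds on the left-hand side as an ``organizing principle'' toward the conjecture, but as you yourself observe, the difference-body bound is strictly weaker than $\mathcal{P}(\Delta_n)$, so it cannot by itself certify the non-symmetric Mahler inequality in any dimension. The paper's results therefore neither prove nor claim to prove any case of Conjecture~\ref{mahler_nonsym}; they establish cases of Conjecture~\ref{main-conj}, which is a different (and, by the inequality you wrote, weaker) statement. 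In short: your assessment that the unrestricted conjecture remains open is correct, and there is no proof in the paper to differ from.
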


      It was shown in \cite{Mak-1978} that  $d_{n}(K)\geq \frac{1}{2^{n}}\vert K\vert \vert (K-K)^{\circ}\vert$. This leads us to the alternative non-symmetric extension of the Mahler conjecture that preserves its connection to the geometry of numbers and non-separable lattices.
    
    \begin{conjecture}[Makai Jr \cite{Mak-1978}]
    \label{main-conj}
        For a convex body $K\subset \mathbb{R}^{n}$.
        $$
        \vert K\vert \vert(K-K)^{\circ}\vert \geq \frac{n+1}{n!}.
        $$
        The equality holds if and only if $K$ is a simplex.

    \end{conjecture}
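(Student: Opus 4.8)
The functional $\Phi(K) := |K|\,|(K-K)^{\circ}|$ is invariant under the full affine group: translations leave $K-K$ unchanged, while for $T \in GL(n,\mathbb{R})$ one has $|TK| = |\det T|\,|K|$ and $(T(K-K))^{\circ} = (T^{-1})^{\top}(K-K)^{\circ}$, whose volume carries the reciprocal factor $|\det T|^{-1}$. The plan is therefore to treat $\Phi$ as a functional on affine equivalence classes and to reduce the conjecture to an extremal problem. First I would establish existence of a minimizer: normalize each class into John position with fixed volume, so that the bodies are trapped between two fixed concentric balls, then apply the Blaschke selection theorem to a minimizing sequence and use continuity of $K \mapsto K-K$ and of the polar-volume map on such nondegenerate bodies to pass to the limit. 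This reduces everything to showing that every minimizer is, up to an affine map, a simplex, with value $\Phi(\Delta_{n}) = (n+1)/n!$.

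Second, I would try to show that a minimizer is a polytope and then force it to be a simplex using the theory of shadow systems (Shephard, Rogers--Shephard, Campi--Gronchi, Meyer--Reisner). Along a linear shadow-system deformation $K_{t}$, the volume $|K_{t}|$ and the volume product of the symmetric body $K_{t}-K_{t}$ are governed by the convexity results of Campi--Gronchi; the strategy is to combine these so that $t \mapsto \Phi(K_{t})$ attains its minimum at an endpoint where a vertex is merged, decreasing the vertex count without increasing $\Phi$. Iterating should push any polytope toward a simplex, and the equality analysis of the underlying shadow-system inequalities should single out the simplex as the unique minimizer, thereby yielding the ``if and only if'' clause.

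The main obstacle, and the reason the conjecture is open for $n \ge 4$, is that the difference-body operation couples $K$ with its reflection: a shadow system on $K$ induces a correlated deformation of $K-K$, and one must then track the volume of the polar of that moving symmetric body. The available convexity results apply cleanly to a single body undergoing a shadow system, but the composite map $K \mapsto (K-K)^{\circ}$ is nonlinear and the induced motion of $K-K$ is not itself a generic shadow system. One also cannot shortcut the problem through known inequalities: even granting the symmetric Mahler bound $\mathcal{P}(K-K) \ge 4^{n}/n!$ together with the sharp Rogers--Shephard inequality $|K-K| \le \binom{2n}{n}|K|$, the resulting estimate $\Phi(K) \ge 4^{n} n!/(2n)!$ is \emph{strictly weaker} than $(n+1)/n!$ (already $4/3 < 3/2$ at $n=2$), since the two inequalities are sharp at different bodies (the cube and the simplex) and cannot be saturated at once. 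A successful argument must therefore exploit that $K-K$ is a difference body rather than an arbitrary symmetric body, for instance by proving a Mahler-type lower bound restricted to the class of difference bodies that is sharp precisely at $\Delta_{n}-\Delta_{n}$; it is this difference-body-specific strengthening that I expect to be the crux.
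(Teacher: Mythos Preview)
The statement you are attempting is labeled in the paper as a \emph{conjecture} (Makai Jr.), and the paper does \emph{not} contain a proof of it in general dimension. The paper's actual contributions are a new proof of the already-known planar case (Theorem~\ref{plane}, via an inscribed affine-regular hexagon and Zang's lemma) and a proof of the three-dimensional case restricted to bodies with tetrahedral symmetry (Theorem~\ref{space}). There is therefore no ``paper's own proof'' of Conjecture~\ref{main-conj} to compare against.

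Your text is not a proof but a research programme, and you say so yourself: you identify the shadow-system step as the obstacle and explain why the Rogers--Shephard plus symmetric-Mahler combination falls short. That diagnosis is accurate, and the paper makes the same observation about the gap between $\binom{2n}{n}^{-1}\mathcal{P}(K-K)$ and $(n+1)/n!$. Two further points are worth flagging. First, the conjecture is open already for general $K$ in $\mathbb{R}^{3}$, not only for $n\ge 4$; the paper's Theorem~\ref{space} covers only the tetrahedrally symmetric subclass. Second, the concrete difficulty you name---that a shadow system on $K$ does not induce a shadow system on $K-K$ in the form needed for the Campi--Gronchi convexity machinery---is exactly the reason the paper abandons any such global variational approach and instead, in the cases it does settle, slices $(K-K)^{\circ}$ by carefully chosen hyperplanes, bounds each piece via Zang's projection lemma and the lower-dimensional (symmetric or non-symmetric) Mahler inequality, and reassembles. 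If you want to make progress along the paper's lines rather than the shadow-system line, that section-and-projection technique is the template.
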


    The two-dimensional case of this conjecture was initially discovered by Eggleston~\cite{Egg} and thus the tight bound $d_{2}(K)\geq \frac{3}{8}$ holds with equality attained for a triangle. The idea of Eggleston is to construct, for a given convex body, a triangle of the same area and greater width in every direction. Another proof, based on restricted chord projections, was later found by Zang~\cite{zang}. Moreover, stability for the two-dimensional case was established in~\cite{volprodstability}.  We give a new proof of Conjecture~\ref{main-conj} for \( n = 2 \) using affine regular hexagons, that is, non-degenerate affine images of the regular hexagon.

    \begin{theorem}[Eggleston, \cite{Egg}]
     \label{plane}
        For a convex body $K\subset \mathbb{R}^{2}$.
            $$
                \vert K\vert \vert(K-K)^{\circ}\vert \geq \frac{3}{2}.
            $$
        The equality holds if and only if $K$ is a triangle.

    \end{theorem}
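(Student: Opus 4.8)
The plan is to reduce everything to one geometric lemma: \emph{for every convex body $K\subset\R^{2}$ there is an affine regular hexagon $H$ (a nondegenerate affine image of the regular hexagon) with $K-K\subseteq H$ and $|H|\le 6|K|$.} Granting it, the inequality is immediate. Affine regular hexagons have volume product $9$: this is an affine invariant, and for the regular hexagon the polar is again a regular hexagon, whence $|H|\,|H^{\circ}|=9$ for every affine regular hexagon. Since $K-K\subseteq H$ gives $H^{\circ}\subseteq(K-K)^{\circ}$,
$$
|K|\,|(K-K)^{\circ}|\ \ge\ |K|\,|H^{\circ}|\ =\ |K|\,\frac{9}{|H|}\ \ge\ |K|\,\frac{9}{6|K|}\ =\ \frac32 .
$$
Triangles are extremal: for $T=\mathrm{conv}\{0,e_{1},e_{2}\}$ one computes $T-T=\{x:\ |x_{1}|\le1,\ |x_{2}|\le1,\ |x_{1}+x_{2}|\le1\}$, an affine regular hexagon of area $3=6|T|$, so $|T|\,|(T-T)^{\circ}|=\tfrac12\cdot 3=\tfrac32$; the quantity $|K|\,|(K-K)^{\circ}|$ is affine invariant, so this holds for every triangle. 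Conversely, equality in the displayed chain forces $|H|=6|K|$ and $H^{\circ}=(K-K)^{\circ}$, hence $H=K-K$; thus $K-K$ is an affine regular hexagon with $|K-K|=6|K|$, and the equality case of the Rogers--Shephard inequality identifies $K$ as a triangle.

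It is convenient to dualize the lemma. Put $C:=(K-K)^{\circ}=\{x:\ h_{K-K}(x)\le1\}$. An affine regular hexagon $H=\{x:\ |\langle x,n_{i}\rangle|\le1,\ i=1,2,3\}$ with $n_{1}+n_{2}+n_{3}=0$ contains $K-K$ exactly when $n_{1},n_{2},n_{3}\in C$, and a change of variables gives $|H|=3/|n_{1}\times n_{2}|$; moreover the $n_{i}$ are the vertices of a triangle with centroid $0$ and area $\tfrac32|n_{1}\times n_{2}|=\tfrac{9}{2|H|}$. So the lemma is equivalent to: \emph{$C=(K-K)^{\circ}$ contains a triangle with centroid at its centre and area at least $3/(4|K|)$} — or, polar-dually, $C$ contains an affine regular hexagon of area at least $3/(2|K|)$. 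On the reference triangle $T$ this is sharp: the alternate vertices of the affine regular hexagon $(T-T)^{\circ}$ cut out precisely such a centred triangle, which is the source of the equality case.

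To build the centred triangle in $C$ I would argue by continuity over the family of affine regular hexagons circumscribing $K-K$. For any $n_{1}\in\partial C$ the translate $C-n_{1}$ meets $C$ (because $n_{1}\in C\subseteq 2C=C-C$), so $\partial C$ meets $\partial C+n_{1}$ and one gets $n_{2}\in\partial C$ with $n_{3}:=-n_{1}-n_{2}\in\partial C$; this produces a one-parameter family of circumscribing affine regular hexagons $H(n_{1})$ of area $3/|n_{1}\times n_{2}|$, and it suffices to locate $n_{1}$ with $|n_{1}\times n_{2}|\ge 1/(2|K|)$. The quantitative input that fixes the constant is the identity $|(K-K)^{\circ}|=\tfrac12\int_{0}^{2\pi}w_{K}(\theta)^{-2}\,d\theta$ (polar coordinates, $w_{K}$ the width function of $K$) together with a sharp comparison between $w_{K}$ and $|K|$ attained for triangles — equivalently, one chooses the three supporting lines of $K-K$ so that the resulting circumscribed affine regular hexagon degenerates to $K-K$ itself exactly when $K$ is a triangle. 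This is Eggleston's comparison recast through affine regular hexagons.

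The main obstacle is precisely this sharp lemma. Crude volume bounds do not suffice: the Rogers--Shephard bound $|K-K|\le6|K|$ combined with the planar symmetric Mahler equality $|K-K|\,|(K-K)^{\circ}|\ge8$ only yields $|K|\,|(K-K)^{\circ}|\ge4/3$, and the gap to $3/2$ is concentrated near triangles, where Rogers--Shephard is tight but $(K-K)^{\circ}$ is far from a parallelogram (volume product $9$, not $8$). So one must use the \emph{shape} of $K-K$, not merely its volume, and the circumscribing affine regular hexagon has to be constructed in a shape-sensitive way. A secondary technical point: the family $H(n_{1})$ can branch (two admissible $n_{2}$ for a given $n_{1}$), so one must follow a continuous branch and check that the area functional attains the required value along it.
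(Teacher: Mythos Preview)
Your reduction is clean and correct: an affine regular hexagon $H$ circumscribing $K-K$ with $|H|\le 6|K|$ immediately yields the inequality via $|H|\,|H^{\circ}|=9$, and your equality analysis through Rogers--Shephard is fine. But you do not prove the lemma. You explicitly label it ``the main obstacle'' and then offer only a sketch: existence of circumscribed affine regular hexagons in every direction, a continuity argument over the one-parameter family $H(n_{1})$, and the polar integral $|(K-K)^{\circ}|=\tfrac12\int_{0}^{2\pi}w_{K}(\theta)^{-2}\,d\theta$. None of this establishes the quantitative bound $|n_{1}\times n_{2}|\ge 1/(2|K|)$. The continuity argument shows the family exists and is connected, not that some member is small enough; the integral formula is a restatement of the quantity you are trying to bound, not an independent input; and the closing appeal to ``Eggleston's comparison'' is a citation of the theorem you are meant to be proving. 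You have reformulated the statement as a lemma of equal strength and then left the lemma open.

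The paper closes exactly this gap, and the mechanism is worth seeing because it is the piece your sketch lacks. Working on the dual side, it inscribes (via Lassak) an affine regular hexagon in $(K-K)^{\circ}$, normalizes so that its vertices are $\pm(1,0),\pm(0,1),\pm(1,1)$, and cuts $(K-K)^{\circ}$ into six sectors of areas $S_{1},S_{2},S_{3}$ (each repeated by symmetry). The key idea is a signed-area identity: for any $P=(u,v)$ in the first sector,
\[
S_{1}\ \ge\ \tfrac12\det\bigl((1,0),P\bigr)+\tfrac12\det\bigl(P,(1,1)\bigr)\ =\ \tfrac12\langle P,(1,0)\rangle,
\]
which forces $\tfrac{1}{2S_{1}}(1,0)\in K-K$. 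Combined with Zang's lemma $|K|\ge\tfrac12|u|\,|Pr_{u^{\perp}}K|$ for $u\in\partial(K-K)$, this yields $|K|\ge\tfrac{1}{4S_{i}}$ for each $i$; summing $2|K|S_{i}\ge\tfrac12$ over $i=1,2,3$ gives $|K|\,|(K-K)^{\circ}|\ge\tfrac32$. This signed-area trick, linking the sector areas of $(K-K)^{\circ}$ directly to the radial function of $K-K$ in the three hexagon directions, is precisely the ``shape-sensitive'' step you correctly identify as necessary but do not supply.
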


    In the general dimension, the best known estimates \cite{Mak2016} are obtained by combining the bounds from the symmetric Mahler conjecture and the Rogers-Shephard inequality \cite{diff}.

    $$
        \vert K \vert \vert (K-K)^{\circ}\vert = \frac{ \vert K \vert }{\vert (K-K)\vert}\vert (K-K)\vert\vert (K-K)^{\circ}\vert \geq \binom{2n}{n}^{-1}\mathcal{P}(K-K) \geq  \binom{2n}{n}^{-1}\frac{\pi^{n}}{n!}.
    $$
    Note that for $n=3$ this approach gives $\vert K \vert \vert (K-K)^{\circ}\vert\geq \frac{8}{15}$ which is very close to the conjectured $\frac{2}{3}$.

    For a convex body $K\subset \mathbb{R}^{3}$ we define
    $SO(K):=\{g\in SO_{3}\text{ }\vert\text{ }gK=K\}.$ We say that a convex body $K\in \mathcal{K}^{3}$ has \textit{tetrahedral symmetry}, if $SO(\Delta_{3})\subset SO(K),$ where 
        $$
            \Delta_{3} = conv \{(1,1,-1),(1,-1,1),(-1,1,1),(-1,-1,-1)\}.
        $$
    
  We denote by $\mathcal{K}^{3}_{\Delta}$ the family of all three-dimensional convex bodies with tetrahedral symmetry. Using ideas similar to \cite{mahler3d} and \cite{mahlersym}, we prove Conjecture \ref{main-conj} for $K\in \mathcal{K}^{3}_{\Delta}$. This result also provides an alternative proof of the recent bound $d_{3}(K)\geq \frac{1}{12}$ \cite{aliev} for such bodies.
    
    \begin{theorem}
    \label{space}
        For a convex body $K \in \mathcal{K}^{3}_{\Delta}$. 
        $$
            \vert K\vert \vert(K-K)^{\circ}\vert\geq \frac{2}{3}.
        $$
        Among convex bodies possessing tetrahedral symmetry, equality holds if and only if $K$ is a tetrahedron.
    \end{theorem}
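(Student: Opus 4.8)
To set up, translate $K$ so that its centroid is at the origin; since the origin is the only point of $\mathbb{R}^{3}$ fixed by $SO(\Delta_{3})$, it then becomes the common centre of all the symmetry, $0\in\mathrm{int}\,K$, and the width function $w_{K}(u)=h_{K}(u)+h_{K}(-u)$ of $K$ (with $h_{K}$ the support function) is well defined. Write $L:=K-K$. Then $L$ and $L^{\circ}=(K-K)^{\circ}$ are invariant under $SO(\Delta_{3})$ and under $x\mapsto-x$, hence under the pyritohedral group $SO(\Delta_{3})\times\{\pm\mathrm{Id}\}$ of order $24$, and the radial function of $L^{\circ}$ equals $1/w_{K}$, an even function. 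The equality case to keep in view is $K=\Delta_{3}$, for which $L$ is a cuboctahedron and $L^{\circ}$ a rhombic dodecahedron, with $|K|=\tfrac{8}{3}$ and $|L^{\circ}|=\tfrac14$; every inequality below must be an equality there.

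The plan is to adapt the hyperplane-section machinery of Iriyeh--Shibata \cite{mahler3d} and of \cite{mahlersym} for the three-dimensional symmetric Mahler inequality, applied to the centrally symmetric body $L$, but with the cutting hyperplanes supplied by the symmetry and with the Rogers--Shephard ratio $|K|/|L|$ carried along --- this is essential, since the bare symmetric bound $\mathcal P(L)\ge\tfrac{32}{3}$ together with $|K|\ge\tfrac1{20}|L|$ only yields $\tfrac{8}{15}$. The three coordinate hyperplanes $H_{i}=e_{i}^{\perp}$ form a single orbit under $SO(\Delta_{3})$, they meet pairwise in the three two-fold axes of $\Delta_{3}$, and, because the pyritohedral group permutes the eight coordinate octants transitively, they cut both $L$ and $L^{\circ}$ into eight mutually congruent pieces. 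This gives for free the simultaneous equipartition of $L$ and $L^{\circ}$ required by the method of \cite{mahlersym}.

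The planar input lives on the $H_{i}$. The orthogonal projection $\overline{K}_{i}:=K|H_{i}$ is centrally symmetric inside $H_{i}$, because the half-turn about $e_{i}$ lies in $SO(\Delta_{3})$; the three bodies $\overline{K}_{i}$ are congruent; and, since projection commutes with the difference-body operation and the section of a polar body is the polar of the projection,
\[
L^{\circ}\cap H_{i}=\bigl((K-K)|H_{i}\bigr)^{\circ}=\bigl(\overline{K}_{i}-\overline{K}_{i}\bigr)^{\circ}=\tfrac12\,\overline{K}_{i}^{\,\circ}.
\]
Hence Mahler's planar inequality \cite{ma1} gives $|L^{\circ}\cap H_{i}|\ge 2/|\overline{K}_{i}|$, with equality precisely when $\overline{K}_{i}$ is a parallelogram --- a square in the case $K=\Delta_{3}$.

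It then remains to assemble $|K|\,|(K-K)^{\circ}|\ge\tfrac23$ out of three pieces: (i) a sharp three-dimensional sectional inequality bounding $|L^{\circ}|$ from below in terms of the common section volume $|L^{\circ}\cap H_{i}|$ and of the value of $1/w_{K}$ along the four three-fold axes (the cube-diagonal directions) --- the analogue, for the pyritohedral body $L^{\circ}$, of the local inequalities in \cite{mahler3d,mahlersym}; (ii) a matching sharp lower bound for $|K|$ in terms of the projection area $|\overline{K}_{i}|$ and the width of $K$ along the cube diagonals; and (iii) the Rogers--Shephard inequality $|K|\ge\tfrac1{20}|L|$ \cite{diff}, sharp for the simplex. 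The hard part is (i): it must be proved with exactly the constant that makes it an equality for the rhombic dodecahedron --- any cruder estimate, for instance replacing $L^{\circ}$ by the convex hull of its three coordinate sections, which is only an octahedron, is too weak --- and (i), (ii), (iii) must all be sharp on the same body. Granting this, the equality analysis closes: planar equality forces each $\overline{K}_{i}$ to be a parallelogram, (i) then forces $L^{\circ}$ to be the corresponding rhombic dodecahedron and $L$ the cuboctahedron, so $K$ is a simplex by Rogers--Shephard, and the only simplex in $\mathcal K^{3}_{\Delta}$ is the regular tetrahedron.
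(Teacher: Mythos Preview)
Your proposal is an outline, not a proof: the decisive step you label (i) --- a sharp sectional lower bound for $|L^{\circ}|$ in terms of $|L^{\circ}\cap H_{i}|$ and the radial values along the cube diagonals, with equality exactly for the rhombic dodecahedron --- is asserted to be needed but never established. You write ``Granting this'' and proceed; that is precisely the content of the theorem. Step (ii) is similarly unspecified: you ask for a sharp lower bound on $|K|$ mixing the projection area onto a \emph{coordinate} plane with the width along a \emph{diagonal} direction, but no such inequality is stated, let alone proved, and Zang's Lemma \ref{zang_lemma} (which is what actually does this job in the paper) relates $|K|$ to the projection and width in the \emph{same} direction. Finally, it is not shown how (i), (ii), (iii) combine to yield exactly $\tfrac{2}{3}$; since Rogers--Shephard alone already loses a factor (giving $\tfrac{8}{15}$, as you note), you must explain why the slack in (iii) is recovered by (i) and (ii).

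The paper's argument is structurally different from your plan. It partitions $(K-K)^{\circ}$ not by the three coordinate planes but by the four \emph{diagonal} planes $(\pm1,\pm1,1)^{\perp}$, producing $8$ ``blue'' and $6$ ``red'' pieces with respective total volumes $V_{1},V_{2}$. It then uses \emph{two} planar inputs: the symmetric Mahler inequality on the coordinate section $(0,0,1)^{\perp}$ (your $S_{\square}$) and, crucially, Theorem~\ref{plane} --- the non-symmetric Eggleston bound $\ge\tfrac32$ --- on the diagonal section $(1,1,1)^{\perp}$ ($S_{\hexagon}$), which is absent from your outline. Three explicit volume estimates (Estimates~\ref{Est1}, \ref{Est2}, \ref{Est3}) are obtained by pairing Zang's Lemma \ref{zang_lemma} with convex-hull lower bounds on the pieces; Rogers--Shephard is never invoked. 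The proof concludes by a short case split on the ratios $V_{2}/V_{1}$ and $S_{\square}/S_{\hexagon}$. The equality case is settled via Lemma~\ref{56points} (classifying tetrahedrally symmetric polytopes with at most six vertices), not via the Rogers--Shephard equality condition.
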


\section{New proof for $n=2$}

    For $u\in \mathbb{R}^{n}$ we denote the orthogonal projection on $u^{\perp} = \{x\in\mathbb{R}^{n} \text{ }: \text{ } \langle x,u\rangle = 0\}$ by $Pr_{u^{\perp}}$. 
    \begin{lemma}[Zang \cite{zang}]
        \label{zang_lemma}
        For $K\in \mathcal{K}^{n}$ and any $u \in \partial(K-K)$.  
        $$
            \vert K\vert\geq \frac{1}{n}\vert u\vert \vert Pr_{u^{\perp}}K \vert.
        $$
    \end{lemma}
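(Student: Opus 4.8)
The plan is to first translate the hypothesis $u\in\partial(K-K)$ into a statement about the chords of $K$, and then reduce the inequality to a comparison of $K$ with a cone. Recall that $K-K$ is a centrally symmetric convex body with $0$ in its interior, and that for a unit vector $v$ the one-dimensional section $(K-K)\cap\mathbb{R}v$ is the segment $[-Lv,Lv]$, where $L$ is the length of a longest chord of $K$ parallel to $v$. I would apply this with $v=u/|u|$. Since $u\in K-K$ lies on the line $\mathbb{R}v$, we get $|u|\le L$; and if $|u|<L$, then $u=\big(1-\tfrac{|u|}{L}\big)\cdot 0+\tfrac{|u|}{L}\cdot(Lv)$ exhibits $u$ as a convex combination, with strictly positive weight on the interior point $0\in\mathrm{int}(K-K)$, of $0$ and the point $Lv\in K-K$, hence $u\in\mathrm{int}(K-K)$, contradicting $u\in\partial(K-K)$. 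Thus $|u|=L$: the hypothesis says precisely that a longest chord of $K$ in the direction $u/|u|$ has length $|u|$.

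Next I would choose coordinates so that $u/|u|=e_{n}$, write points of $\mathbb{R}^{n}$ as $(q,t)$ with $q\in u^{\perp}\cong\mathbb{R}^{n-1}$, and set $P:=Pr_{u^{\perp}}K$, which is a full-dimensional convex body in $\mathbb{R}^{n-1}$ because $K$ has non-empty interior. For $q$ in the interior of $P$ the fiber of $K$ over $q$ is a segment $\{q\}\times[a(q),b(q)]$ with $a$ convex and $b$ concave, so the chord-length function $f:=b-a$ is non-negative and \emph{concave} on $P$; by the previous paragraph $\sup_{P}f=|u|$, and $|K|=\int_{P}f(q)\,dq$ by Fubini. (A value of $f$ arbitrarily close to $|u|$ is attained at interior points of $P$ even when the longest chord projects to $\partial P$, by convexity of $K$.)

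Now fix $p_{0}\in\mathrm{int}\,P$ with $f(p_{0})$ as close to $|u|$ as desired, and let $g\colon P\to\mathbb{R}$ be the ``tent'' over $P$ with peak $f(p_{0})$ at $p_{0}$: the function equal to $f(p_{0})$ at $p_{0}$, equal to $0$ on $\partial P$, and affine on each ray issuing from $p_{0}$. On each such ray the restriction of $f$ is a one-variable concave function with value $f(p_{0})$ at $p_{0}$ and value $\ge 0$ at the boundary point, hence it dominates the chord joining these values, which is exactly $g$; therefore $f\ge g$ on all of $P$. Finally, the subgraph of $g$ over $P$ is the cone $\mathrm{conv}\big(\{(p_{0},f(p_{0}))\}\cup(P\times\{0\})\big)$, whose $n$-dimensional volume is $\tfrac1n\,|P|\,f(p_{0})$, so $\int_{P}g=\tfrac1n\,|P|\,f(p_{0})$. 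Combining, $|K|=\int_{P}f\ge\int_{P}g=\tfrac1n\,|P|\,f(p_{0})$, and letting $f(p_{0})\to|u|$ gives $|K|\ge\tfrac1n\,|u|\,|Pr_{u^{\perp}}K|$, as claimed.

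I expect the only genuine point to be the first step --- reading off from ``$u\in\partial(K-K)$'' that $|u|$ equals the maximal chord length in that direction --- together with the minor technicality that the extremal chord may project to $\partial P$, which the interior choice of $p_{0}$ followed by a limit circumvents. Everything afterwards is the elementary observation that a concave function on a convex body lies above the cone with the same peak, plus the formula for the volume of a cone.
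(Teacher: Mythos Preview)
Your proof is correct and rests on the same core idea as the paper's --- comparing $K$ with a cone over its projection --- but the paper's execution is shorter by routing through Steiner symmetrization. The paper writes $u=a-b$ with $a,b\in\partial K$, Steiner-symmetrizes $K$ about $u^{\perp}$ to get $\tilde K$ (so $|\tilde K|=|K|$), and then simply observes that $\tilde K$ is convex and contains both $Pr_{u^{\perp}}K$ and the images $\tilde a,\tilde b$ of $a,b$; hence $\tilde K\supseteq\mathrm{conv}\{\tilde a,\tilde b,Pr_{u^{\perp}}K\}$, a bipyramid of volume exactly $\tfrac{1}{n}\,|u|\,|Pr_{u^{\perp}}K|$. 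This packages your ``concave chord-length function dominates the tent'' step into a single geometric containment, and because it works with the specific chord $[a,b]$ realizing $u$ rather than with $\sup_{P}f$, it sidesteps your approximation-and-limit manoeuvre with $p_{0}$. Your version, on the other hand, makes the role of concavity fully explicit and needs no auxiliary symmetrization; the two arguments are really the same picture viewed analytically versus geometrically.
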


    \begin{proof}
  Let \( u = a - b \) for \( a, b \in \partial K \). We denote by \(\tilde{K}\) the Steiner symmetrization of \(K\) with respect to \(u^{\perp}\), and let \(\tilde{a}\) and \(\tilde{b}\) be the images of \(a\) and \(b\) under this symmetrization. 

        $$
            \vert K\vert = \vert \tilde{K}\vert \geq \vert\mathrm{conv}\{\tilde{a}, \tilde{b}, Pr_{u^{\perp}}K\}\vert = \frac{1}{n}\vert u\vert \vert Pr_{u^{\perp}}K \vert.
        $$
    
    \end{proof}

        Note that in the planar case Lemma \ref{zang_lemma} implies the following statement. If four points \( p_1, p_2, p_3, p_4 \in K \subset \mathbb{R}^{2} \) satisfy
\[
|K| = \frac{1}{2} \left| \det(p_3 - p_1,\, p_4 - p_2) \right|,
\]
then \( K = \mathrm{conv}(p_1, p_2, p_3, p_4) \), and moreover, $K$ is a triangle or $K$ is a polygon and these points are the vertices of \( K \) in cyclic order.
    
    \begin{proof}[Proof of Theorem~\ref{plane}]
    
        \begin{figure}
            \centering
            \begin{subfigure}{.5\textwidth}
                \centering
                \includegraphics[width=.7\linewidth]{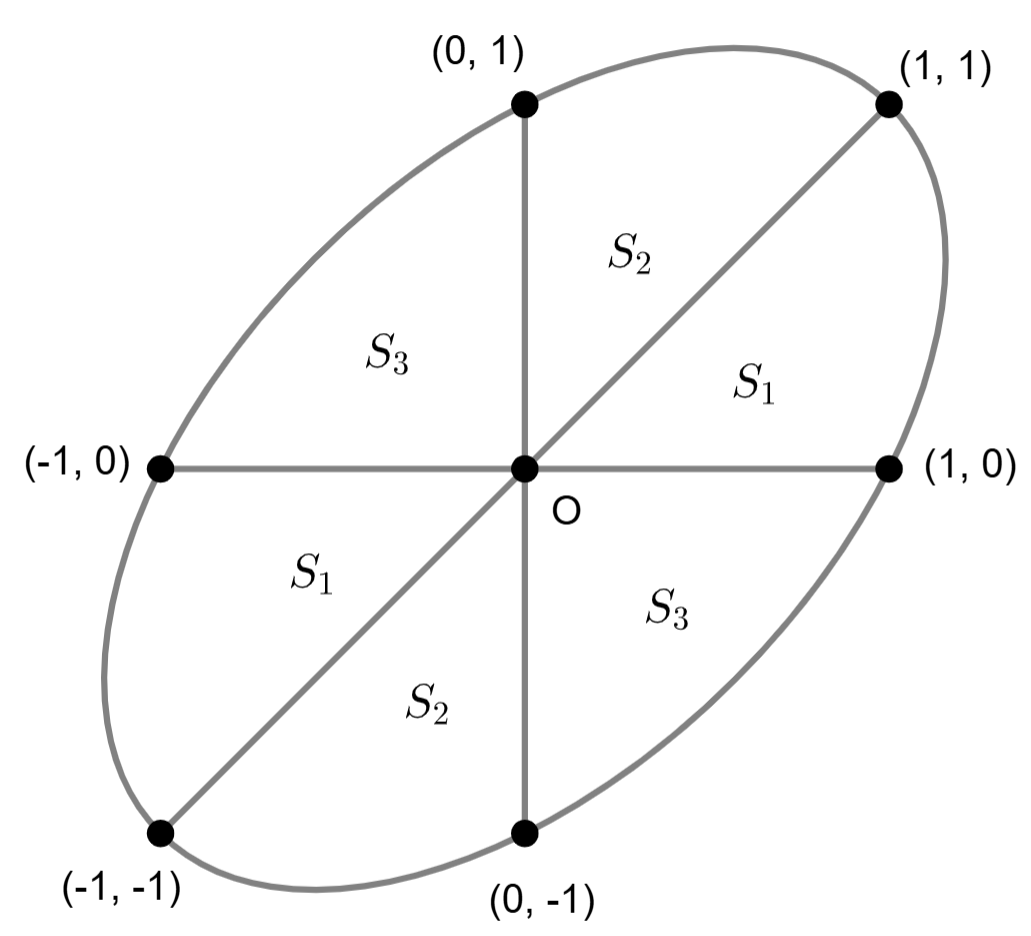}
                \caption{Partition of $(K-K)^{\circ}$}
                \label{fig:1a}
            \end{subfigure}%
            \begin{subfigure}{.5\textwidth}
                \centering
                \includegraphics[width=.7\linewidth]{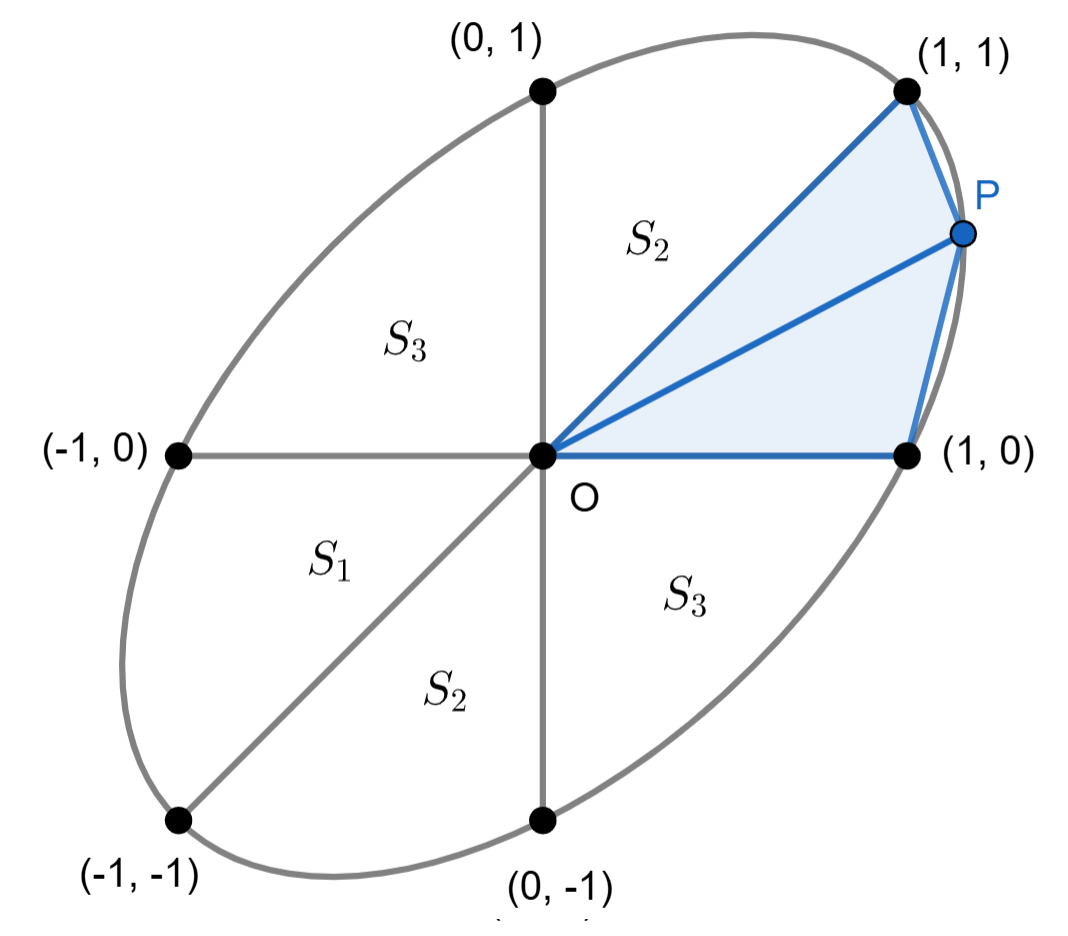}
                \caption{Area estimation}
                \label{fig:1b}
            \end{subfigure}
            \label{fig:pred}
            \caption{Proof idea for the planar case}
        \end{figure}

        It was proved by Lassak \cite{lassak} that for any centrally symmetric convex body \(L \in \mathcal{K}^{2}_{0}\) and any direction, there exists an affine regular hexagon \(A_1 \ldots A_6\) inscribed in \(L\) such that \(A_1 A_2\) is parallel to this direction. Thus, after an appropriate linear transformation, we can assume that \(\pm(1,0), \pm(0,1), \pm(1,1) \in \partial (K-K)^{\circ}\). We split $(K-K)^{\circ}$ into $6$ pieces by the lines $(0,1)^{\perp
        }, (1, 0)^{
        \perp
        }, (1,-1)^{\perp
        }$ as in the Figure~\ref{fig:1a}. Then for any $P=(u,v) \in (K-K)^{\circ}$ we can estimate the area $S_{1}$ from below by the sum of signer areas of $\Delta O(0,1)P$  and $\Delta OP(1, 1)$ as in the Figure~\ref{fig:1b}. 
    
    $$
        S_{1} \geq \frac{1}{2}\det((1,0),(u,v)) + \frac{1}{2}\det((u,v),(1,1)) = \frac{1}{2}\langle(u,v),(1,0)\rangle  \implies \frac{1}{2S_{1}}(1,0) \in (K-K).
    $$
    Similar argument for $S_{2}$ and $S_{3}$ gives 
    $\frac{1}{2S_{2}}(0,1)\in K-K$ and $ \frac{1}{2S_{3}}(-1,1) \in K-K.$ 

        Since $(0,1), (1,0), (1,1) \in \partial(K-K)^{\circ}$, exist $(a,1), (1,b), (c, 1-c) \in \partial(K-K) $. Therefore, by Lemma~\ref{zang_lemma}, we get the following inequalities

    $$
        \begin{cases}
            \vert K\vert\geq \frac{1}{2}\vert det((a,1), \frac{1}{2S_{1}}(1,0))\vert  =
            \frac{1}{4S_{1}}; \\
            \vert K\vert \geq \frac{1}{2}\vert det((1,b), \frac{1}{2S_{2}}(0,1))\vert  = \frac{1}{4S_{2}};\\
              \vert K\vert\geq \frac{1}{2}\vert det((c,1-c), \frac{1}{2S_{3}}(-1,1))\vert  = \frac{1}{4S_{3}}.
        \end{cases}
    $$
    Combining these estimates, we conclude that 
    $$
        \vert K \vert \vert (K-K)^{\circ}\vert  = 2\vert K\vert S_{1} + 2\vert K\vert S_{2} + 2\vert K\vert S_{3} \geq 6\vert K\vert\min(S_{1},S_{2},S_{3}) \geq\frac{3}{2}.
    $$
    \end{proof}
    
    \begin{proof}[Equality case.]
        Equality can only be achieved if $S_{1}=S_{2}=S_{3}=S$ and $\frac{1}{2S}(0,1)$, $\frac{1}{2S}(1,0)$, $\frac{1}{2S}(-1,1)\in \partial (K-K).$ In addition, the necessary condition is that
        $$\vert K\vert = \frac{1}{4S} = \begin{cases}
            \frac{1}{2}\det((1,b), \frac{1}{2S}(0,1)); \\ 
            \frac{1}{2}\det(\frac{1}{2S}(1,0), (a,1)); \\
            \frac{1}{2}\det(\frac{1}{2S}(1,-1), (c,1-c)).
        \end{cases} $$
We use the implication of Lemma \ref{zang_lemma} for the planar case. Assume that $K$ is not a triangle and that $K = \mathrm{conv}\{0, p_{1}, \frac{1}{2S}(0,1), p_{2}\}$ with $p_{2}-p_{1} = (1, b)$. Using the second equality we get $p_{2}-p_{1} = \frac{1}{2S}(1,0)$ and thus $b=0$ and $S=\frac{1}{2}.$ Using the third equality we get $(0,1) = (-1,1)$ or $(1,0) = (-1,1)$ thus a contradiction. 
    \end{proof}

\section{Special case for $n=3$.}

    \begin{proof}[Proof of Theorem~\ref{space}]     
    We split $(K-K)^{\circ}$ into $14$ pieces by the planes $(1,1,1)^{\perp}$, $ (1,-1,1)^{\perp}$, $(-1,1,1)^{\perp}$, $(-1,-1,1)^{\perp}$. By symmetry, there are $8$ blue pieces congruent to $(K-K)^{\circ}\cap \{-x+ y+ z\geq 0, x-y+z\geq 0, x+y-z\geq 0\}$ and $6$  red pieces congruent to $(K-K)^{\circ}\cap \{ \pm x \pm y + z\geq 0\}$ (see Figure  \ref{fig:sub1}). We denote the volume of each blue piece as $\frac{1}{8}V_{1}$ and the volume of each red piece as $\frac{1}{6}V_{2}.$ Clearly, $V_{1} + V_{2} = \vert (K-K)^{\circ}\vert.$ 
    
    We use special sections of $(K-K)^{
    \circ
    }$ (see Figures \ref{fig:sub2} and \ref{fig:sub3}) to estimate $\vert Pr_{(0,0,1)^{\perp}}K\vert $ and $\vert Pr_{(1,1,1)^{\perp}} K\vert$ from below. We define 
    $$
    \begin{cases}
        S_{\hexagon} = \frac{1}{6}\vert (K-K)^{\circ}\cap (1,1,1)^{\perp}\vert; \\
        S_{\square} = \frac{1}{4}|(K-K)^{\circ}\cap (0,0,1)^{\perp}|.
    \end{cases}
    $$
     The projection $Pr_{(0,0,1)^{\perp}}K$ is centrally symmetric since  $(x,y,z)\mapsto(-x,-y,z) \in SO(\Delta_{3}).$ It is well-known that $(K-K)^{\circ}\cap u^{\perp} = (Pr_{u^{\perp}}(K-K))^{\circ}$, where the polar of $Pr_{u^{\perp}}(K-K)$ is taken in $u^{\perp}$ (e.g. \cite[p.22]{geomtom}). Therefore, by the planar Mahler's conjecture  we have 

     $$
        \vert Pr_{(0,0,1)^{\perp}}K\vert \vert (K-K)^{\circ} \cap (0,0,1)^{\perp}\vert 
 = \frac{1}{4}\vert Pr_{(0,0,1)^{\perp}}K -Pr_{(0,0,1)^{\perp}}K \vert \vert (K-K)^{\circ}\cap (0,0,1)^{\perp}\vert = 
     $$
     \begin{equation}
        = \frac{1}{4}\vert Pr_{(0,0,1)^{\perp}}(K-K)  \vert \vert (K-K)^{\circ}\cap (0,0,1)^{\perp}\vert\geq 2 \implies \vert Pr_{(0,0,1)^{\perp}}K\vert  \geq \frac{1}{2S_{\square}}.
        \label{square_est}
        \tag{$\square$}
      \end{equation}
    By Theorem \ref{plane} we have
     \begin{equation}
        \vert Pr_{(1,1,1)^{\perp}} K\vert \vert (K-K)^{\circ}\cap (1,1,1)^{\perp}\vert \geq \frac{3}{2} \implies \vert Pr_{(1,1,1)^{\perp}} K\vert \geq \frac{1}{4S_{\hexagon}}.
        \label{hexagon_est}
        \tag{$\hexagon$}
     \end{equation}
     
   Next, we establish lower bounds for $V_{1}\vert K\vert$ and $V_{2}\vert K\vert$ in terms of $S_{\square}$ and $S_{\hexagon}$. 
    
 \begin{figure}
\centering
\begin{subfigure}{.33\textwidth}
  \centering
  \includegraphics[width=\linewidth]{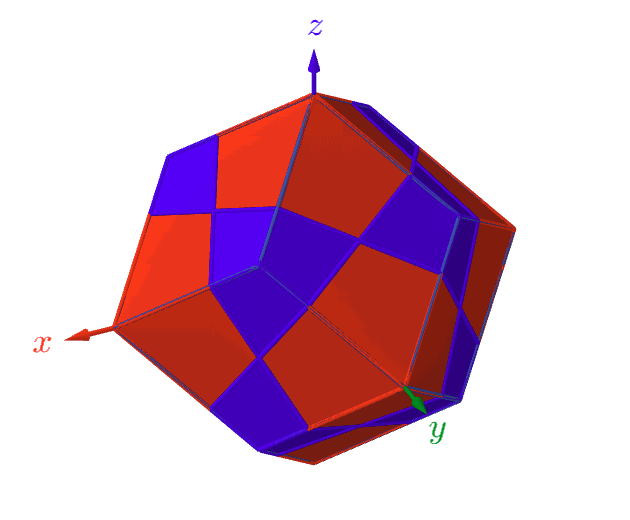}
  \caption{Partition of $(K-K)^{\circ}$}
  \label{fig:sub1}
\end{subfigure}%
\begin{subfigure}{.33\textwidth}
  \centering
  \includegraphics[width=\linewidth]{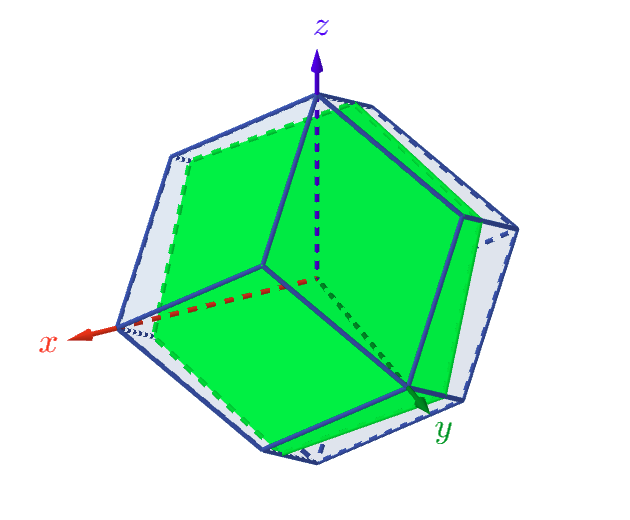}
  \caption{$6S_{
  \hexagon
  }$}
  \label{fig:sub2}
\end{subfigure}
\begin{subfigure}{.33\textwidth}
  \centering
  \includegraphics[width=\linewidth]{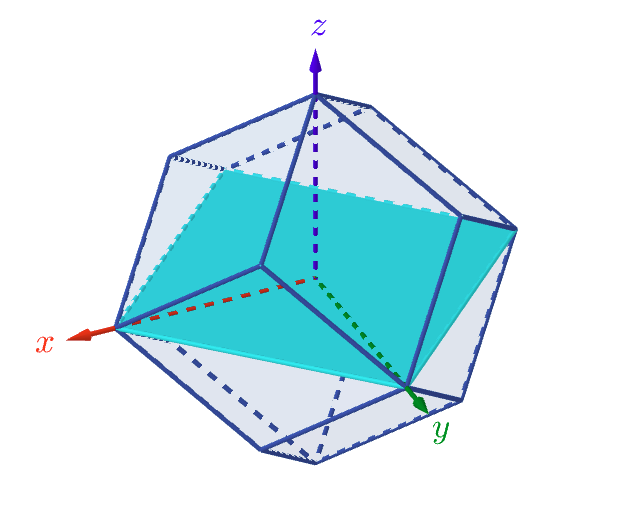}
  \caption{$4S_{
  \square
  }$}
  \label{fig:sub3}
\end{subfigure}

\label{fig:test2}
\caption{Partition and sections used in the proof }
\end{figure}



 \begin{figure}
\centering
\begin{subfigure}{.33\textwidth}
  \centering
  \includegraphics[width=\linewidth]{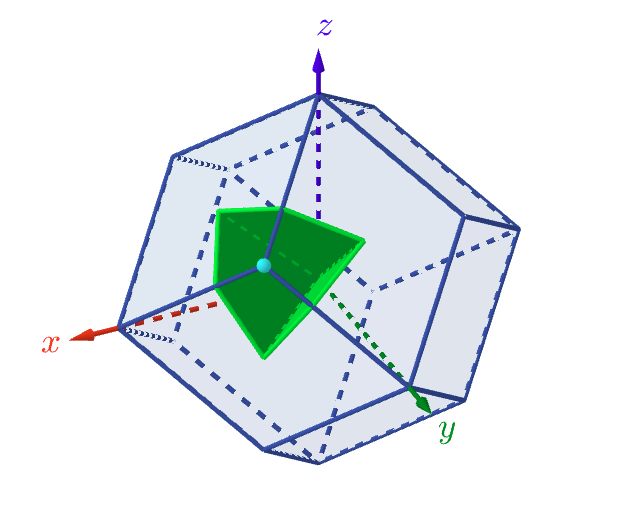}
  \caption{$a(1,1,1)$ and $A_
{
\hexagon
}^{(1,2,3)}$}
  \label{fig:sub5}
\end{subfigure}%
\begin{subfigure}{.33\textwidth}
  \centering
  \includegraphics[width=\linewidth]{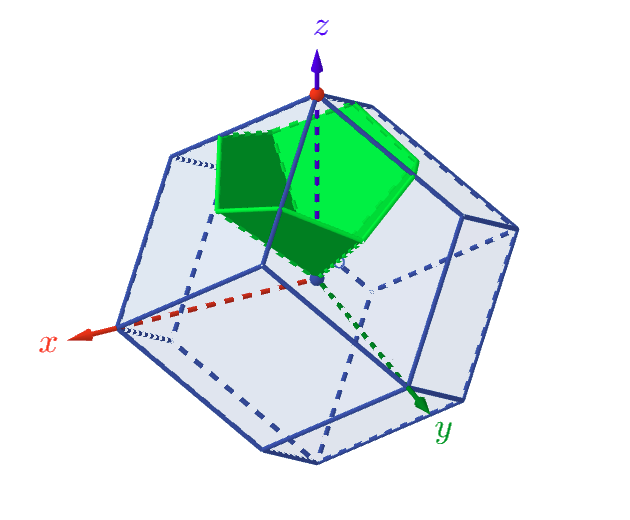}
  \caption{$b(0,0,1)$ and $A^{(\pm\pm
  )}_{\hexagon}$}
  \label{fig:sub6}
\end{subfigure}
\begin{subfigure}{.33\textwidth}
  \centering
  \includegraphics[width=\linewidth]{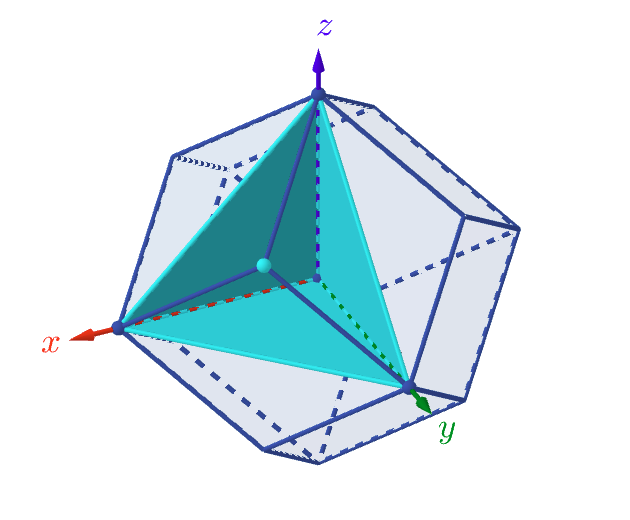}
  \caption{$a(1,1,1)$ and $A_{\square}^{(1,2,3)}$}
  \label{fig:sub7}
\end{subfigure}

\label{fig:test2}
\caption{Partition and sections used to get Estimates
\ref{Est1}, 
\ref{Est2}, \ref{Est3} }
\end{figure}

    \noindent\textbf{Estimate 1. }Let $a(1,1,1) \in \partial (K-K)^{\circ}.$ By symmetry,  $\frac{1}{3a}(1,1,1) \in \partial (K-K).$ We denote $$
    \begin{cases}
     A^{(1)}_{\hexagon} = (K-K)^{\circ}\cap\{-x+y+z = 0, x - y + z\geq 0, x+y-z\geq 0\}; \\
     
     A^{(2)}_{\hexagon} = (K-K)^{\circ}\cap\{- y + z+x=0 ,  y-z+x\geq 0, y+z-x \geq 0 \}; \\

     A^{(3)}_{\hexagon} = (K-K)^{\circ}\cap\{-z+x+y  = 0, z - x + y\geq 0, z+x-y\geq 0\}. 
    \end{cases}
    $$
    Clearly, $\vert A^{(1)}_{\hexagon} \vert = \vert A^{(2)}_{\hexagon} \vert =\vert A^{(3)}_{\hexagon} \vert =S_{\hexagon}.$ We have the following chain of inequalities.

    $$
        \frac{1}{8}V_{1}\geq \vert \mathrm{conv}\{a(1,1,1), A^{(1)}_{\hexagon}, A^{(2)}_{\hexagon},A^{(3)}_{\hexagon}\} \vert \geq 3\frac{1}{3}S_{\hexagon}\vert \langle a(1,1,1) ,\frac{1}{\sqrt{3}}(1,1,-1)\rangle\vert = \frac{a S_{\hexagon}}{\sqrt{3}}.
    $$
    By Lemma~\ref{zang_lemma} we have 
        \begin{equation}
        \begin{split}
        \vert K \vert V_{1} \geq \frac{1}{3}\vert \frac{1}{3a} (1,1,1)\vert \vert Pr_{(1,1,1)^{\perp}}K\vert V_{1} = \frac{1}{3\sqrt{3}a} \vert Pr_{(1,1,1)^{\perp}}K\vert V_{1}\geq  \\ \geq  \frac{1}{3\sqrt{3}a} \frac{1}{4S_{\hexagon}}V_{1} =
        \frac{1}{12\sqrt{3}aS_{\hexagon}} V_{1} \geq \frac{1}{12\sqrt{3}aS_{\hexagon}}8\frac{a S_{\hexagon}}{\sqrt{3}} = \frac{2}{9}. 
         \label{Est1}
         \end{split}
    \end{equation}
    
    \noindent\textbf{Estimate 2. }Let $b(0,0,1) \in \partial (K-K)^{\circ}.$ By symmetry,  $\frac{1}{b}(0,0,1) \in \partial (K-K).$ We denote $$
    \begin{cases}
     A^{(--)}_{\hexagon} = (K-K)^{\circ}\cap\{-x + y + z\geq 0,x+y+z = 0,  x-y+z\geq 0\}; \\
      A^{(+-)}_{\hexagon} = (K-K)^{\circ}\cap\{x+y+z \geq 0, -x + y + z=0 , -x-y+z\geq 0\}; \\
     A^{(++)}_{\hexagon} = (K-K)^{\circ}\cap\{x-y+z  \geq 0, -x - y + z=0, -x+y+z\geq 0\}; \\
      A^{(-+)}_{\hexagon} = (K-K)^{\circ}\cap\{-x-y+z  \geq 0, x - y + z=  0, x+y+z\geq 0\}. \\
    \end{cases}.
    $$
    We have the following chain of inequalities. 
    $$
        \frac{1}{6}V_{2} \geq \vert \mathrm{conv}\{b(0,0,1), A^{(--)}_{\hexagon}, A^{(-+)}_{\hexagon},A^{(+-)}_{\hexagon},A^{(++)}_{\hexagon}\}\vert \geq $$
    $$\geq 4\frac{1}{3}S_{\hexagon} \vert \langle b(0,0,1),\frac{1}{\sqrt{3}}(1,1,1)\rangle\vert = \frac{4}{3\sqrt{3}}S_{\hexagon}b.
    $$
By Lemma~\ref{zang_lemma} we have

\begin{equation}
\begin{split} 
        \vert K \vert V_{2} \geq \frac{1}{3} \vert \frac{1}{b}(0,0,1)\vert \vert Pr_{(0,0,1)^{\perp}}K\vert V_{2} = \frac{1}{3b} \vert Pr_{(0,0,1)^{\perp}}K\vert V_{2} \geq \\ \geq \frac{1}{3b}\frac{1}{2S_{\square}} V_{2} = \frac{1}{6b S_{\square}} V_{2}\geq 
\frac{1}{6b S_{\square}}6\frac{4}{3\sqrt{3}}S_{\hexagon}b = \frac{4}{3\sqrt{3}}\frac{S_{\hexagon}}{S_{\square}}.
\end{split}
   \label{Est2}
\end{equation}

    \noindent\textbf{Estimate 3.} 
Let us recall what we assumed $a(1,1,1) \in \partial (K-K)^{\circ}$. We denote $$
    \begin{cases}
     A^{(1)}_{\square} = (K-K)^{\circ}\cap\{x=0, y\geq 0, z\geq 0\}; \\
      A^{(2)}_{\square} = (K-K)^{\circ}\cap\{x\geq 0, y = 0, z\geq 0\}; \\

     A^{(3)}_{\square} = (K-K)^{\circ}\cap\{x\geq 0, y\geq 0, z= 0\}. 
    \end{cases}
    $$
    Clearly, $\vert A^{(1)}_{\square} \vert = \vert A^{(2)}_{\square} \vert =\vert A^{(3)}_{\square} \vert =S_{\square}.$ We have the following chain of inequalities.

    $$\frac{1}{8}\vert (K-K)^{\circ}\vert \geq \vert \mathrm{conv}\{a(1,1,1), A^{(1)}_{\square}, A^{(2)}_{\square}, A^{(3)}_{\square}\}\vert
     \geq 3\frac{1}{3}S_{\square} \vert \langle a(1,1,1),(0,0,1)\rangle\vert = aS_{\square}.
    $$
    Then, as in Estimate~\ref{Est1}, we get 
    \begin{equation}
        \vert K \vert \vert (K-K)^{\circ}\vert \geq \frac{1}{12\sqrt{3}aS_{\hexagon}} \vert (K-K)^{\circ}\vert \geq  \frac{1}{12\sqrt{3}aS_{\hexagon}} 8aS_{\square}  \geq  \frac{2}{3\sqrt{3}} \frac{S_{\square}}{S_{\hexagon}}.
        \label{Est3}
    \end{equation}

    To finish the proof we consider three cases.
    
    \noindent\textbf{Case $V_{2}\geq 2V_{1}$.}  Estimate~\ref{Est1} implies  
    $$
      \vert K \vert \vert (K-K)^{\circ}\vert = \vert K \vert (V_{1} + V_{2}) \geq 3 \vert K \vert V_{1} \geq 3\frac{2}{9} = \frac{2}{3}.  
    $$

    \noindent\textbf{Case } $2V_{1}\geq V_{2}$ and $\frac{S_{\hexagon}}{S_{\square}}\geq \frac{1}{\sqrt{3}}$\textbf{.} By Estimate~\ref{Est2} we get 
    $$
      \vert K \vert \vert (K-K)^{\circ}\vert = \vert K \vert (V_{1} + V_{2}) \geq \frac{3}{2} \vert K \vert V_{2} \geq \frac{3}{2} \frac{4}{3\sqrt{3}}\frac{S_{\hexagon}}{S_{\square}} \geq \frac{3}{2} \frac{4}{3\sqrt{3}} \frac{1}{\sqrt{3}} = \frac{2}{3}.  
    $$

    \noindent\textbf{Case} $\frac{S_{\square}}{S_{\hexagon}}\geq \sqrt{3}$\textbf{.} In this case we use Estimates~\ref{Est1}, \ref{Est2}, \ref{Est3}.
    $$
         \vert K \vert \vert (K-K)^{\circ}\vert = \frac{1}{2}\left(\vert K \vert V_{1} + \vert K \vert V_{2} + \vert K \vert \vert (K-K)^{\circ} \right) \geq 
    $$
    $$
        \geq \frac{1}{2}\left(\frac{2}{9} + \frac{4}{3\sqrt{3}}\frac{S_{\hexagon}}{S_{\square}} 
 +\frac{2}{3\sqrt{3}} \frac{S_{\square}}{S_{\hexagon}} \right) \geq \frac{2}{3} \text{ for }\frac{S_{\square}}{S_{\hexagon}}\geq \sqrt{3}.
    $$
        
    \end{proof}

    \begin{lemma}
        \label{56points}
        Assume that a convex polyhedron $P$ with tetrahedral symmetry has at most $6$ vertices. Then $P$ is a tetrahedron or an octahedron. 
    \end{lemma}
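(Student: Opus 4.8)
The plan is to exploit the action of the rotation group $SO(\Delta_3)$ on the vertex set of $P$. This group has order $12$; it is generated by the $3$-fold rotations about the four axes through vertices of $\Delta_3$ (equivalently, the body-diagonals of the cube) and the $2$-fold rotations about the three coordinate axes, i.e. the maps $(x,y,z)\mapsto(\pm x,\pm y,z)$ with an even number of minus signs and their cyclic images. The key point is that the orbit of any point $v\in\R^3$ under this group has a size that divides $12$, and the size is small only if $v$ lies on one of the special axes: an orbit has size $1$ only if $v=0$; size $4$ if $v$ lies on one of the four $3$-fold axes (these axes are the rays $\pm(1,1,1),\pm(1,-1,-1),\pm(-1,1,-1),\pm(-1,-1,1)$, and the stabilizer is the cyclic group of order $3$); size $6$ if $v$ lies on one of the three $2$-fold (coordinate) axes; and size $12$ otherwise.

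First I would observe that the vertex set $V$ of $P$ is a union of such orbits, and since $|V|\le 6$, each orbit has size $\le 6$, so every vertex lies either at the origin or on a $3$-fold axis or on a coordinate axis. Since $0\in\mathrm{int}\,P$ is impossible for a vertex (a vertex is an extreme point, $P$ has nonempty interior and is symmetric under $-\mathrm{id}$... wait, $-\mathrm{id}\notin SO_3$), I should be slightly careful: $SO(\Delta_3)$ does not contain $-\mathrm{id}$, so a priori $0$ could be a vertex. But a convex body has $0$ as a vertex only if $0\in\partial P$, and then by symmetry the whole orbit structure is constrained; in any case I would argue that if $0$ is a vertex then $P$ is contained in a halfspace through $0$ fixed by the group, forcing $P$ to be lower-dimensional, a contradiction. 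So $0\notin V$. Next, case analysis on which axes are used: the candidate vertices are (up to positive scaling on each ray independently) points $\pm a e_1,\pm b e_2,\pm c e_3$ on the coordinate axes — but a single coordinate axis already contributes an orbit of size $6$ (both $+$ and $-$ directions are in one orbit under the $2$-fold rotations about the other two axes), wait no: $(x,y,z)\mapsto(-x,-y,z)$ sends $e_3$ to $e_3$, and $(x,y,z)\mapsto(-x,y,-z)$ sends $e_3\mapsto -e_3$; so $+e_3$ and $-e_3$ are in the same orbit, and that orbit is $\{\pm e_1,\pm e_2,\pm e_3\}$, size $6$ — the vertices of an octahedron. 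Similarly a single $3$-fold axis contributes the orbit $\{\pm(1,1,1),\pm(1,-1,-1),\ldots\}$ of size... the four axes give $8$ points, too many; but one ray $(1,1,1)$ has orbit of size $4$ under the group? Let me recheck: the stabilizer of $(1,1,1)$ is the order-$3$ rotation about that axis, so the orbit has size $12/3=4$. The $2$-fold rotation $(x,y,z)\mapsto(-x,-y,z)$ sends $(1,1,1)\mapsto(-1,-1,1)$, and $(x,y,z)\mapsto(-x,y,-z)$ sends it to $(-1,1,-1)$, and the third sends it to $(1,-1,-1)$. So the orbit of $(1,1,1)$ is exactly $\{(1,1,1),(-1,-1,1),(-1,1,-1),(1,-1,-1)\}$ — the four vertices of $\Delta_3$, size $4$. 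Good.

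Then the enumeration is: $|V|\le 6$ and $V$ is a union of orbits of sizes in $\{4,6\}$ (after ruling out $0$), with distinct orbits on the same axis-ray allowed only via rescaling — but two vertices of $P$ on the same ray from the origin cannot both be extreme points of a convex body (the nearer one is in the convex hull of the farther one and $0$... again only if $0\in P$, which it is, since $P$ is a body with the origin in its interior? not necessarily, but $0$ is fixed by the group so it lies in $P$ by convexity and symmetry provided $P$ is bounded — actually $0=\frac{1}{|V|}\sum$ is not the centroid since orbits aren't centrally symmetric individually... the orbit of $(1,1,1)$ has centroid $0$, and the coordinate-axis orbit has centroid $0$, so any vertex orbit has centroid $0$, hence $0\in P$). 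So $0\in P$, and therefore at most one vertex per ray. Since $4+6>6$ and $4+4>6$ and $6+6>6$, $V$ is a single orbit: either $V=\{\pm e_1,\pm e_2,\pm e_3\}$ scaled, giving an octahedron, or $V$ is a scaled copy of the four vertices of $\Delta_3$, giving a tetrahedron (and in fact by the tetrahedral symmetry a positive scaling of each of these two orbits is the only freedom, and scaling does not change the combinatorial type). That completes the proof.

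The main obstacle I anticipate is the bookkeeping around whether $0$ can be a vertex and the precise claim that no two vertices share a ray — both reduce to the fact that the orbit of any nonzero point has centroid $0$, hence $0\in P$, so I would state and use that cleanly up front. The rest is the finite orbit-size computation, which is routine once the group and its axes are fixed; I would present the four orbit-size cases in a short displayed list and conclude by the counting $4+4>6$, $4+6>6$, $6+6>6$.
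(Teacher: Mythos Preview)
Your argument is correct and in fact cleaner than the paper's. The paper works only with the cyclic subgroup generated by $g:(x,y,z)\mapsto(y,z,x)$, so its orbits on $V$ have size $1$ or $3$; this forces a hands-on case split on $|V|\in\{4,5,6\}$ and, in the $|V|=6$ case, a further subcase analysis using the half-turn $(x,y,z)\mapsto(x,-y,-z)$ to match up the two $3$-orbits. By contrast, you use the full group $SO(\Delta_3)\cong A_4$ and the orbit--stabilizer theorem to see that nonzero orbits have size $4$, $6$, or $12$; together with the observation that every nonzero orbit has centroid $0$ (so $0\in P$ and $0$ is never a vertex), the inequality $|V|\le 6$ forces $V$ to be a single orbit of size $4$ (a scaled $\Delta_3$ or $-\Delta_3$) or size $6$ (a scaled cross-polytope). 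Your route avoids all the subcase bookkeeping at the cost of one extra structural fact about $A_4$ (no subgroup of order $6$, and the Klein four-subgroup fixes only the origin), which is standard. When you write this up, drop the internal ``wait'' corrections and state up front: (i) the four possible orbit sizes, (ii) why $0\notin V$, and (iii) the counting $4+4,4+6,6+6>6$; the argument then fits in a short paragraph.
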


    \begin{proof}
        Let $V$ be the set of vertices of $P$. Let $g:(x,y,z)\mapsto (y,z,x)$, $g\in SO(\Delta_{3})$. We consider the orbits of the element $g$ under its action on the set $V$. The length of each orbit is either $3$ or $1$. The number of orbits of length one cannot be greater than two, because in that case, one of the vertices of $P$ would lie on the segment between the other two. Therefore there is at least one orbit of length $3$. Let $(p_{1},p_{2},p_{3})$, $(p_{2},p_{3},p_{1})$, $(p_{3},p_{1},p_{2}) \in V$. If $\vert V\vert=4$ then $P$ is a tetrahedron.

        \textbf{Case} $\vert V\vert=5$\textbf{.} Then there are two orbits of length one. Let $(p,p,p),(q,q,q)\in V$ and $p
        \neq 0$. Then $(p,-p,-p), (-p,p,-p),(-p,-p,p)\in V$. Then $q \neq 0$ and $(q,-q,-q)$, $(-q,q,-q)$, $(-q,-q,q)\in V$. Therefore $p=q$ and $\vert V\vert=4.$
        
        \textbf{Case} $\vert V\vert=6$\textbf{.} There is another orbit of length $3$. Let $(q_{1},q_{2},q_{3})$, $(q_{2},q_{3},q_{1})$, $(q_{3},q_{1},q_{2}) \in V$. We say that it is $q$-orbit. $(x,y,z)\mapsto (x,-y,-z) \in SO_{3}(\Delta_{3})$ and, therefore, $(p_{1}, -p_{2}, -p_{3})\in V$.  
        Consider the following cases.

        If $(p_{1}, -p_{2}, -p_{3}) = (p_{1}, p_{2}, p_{3})$, then $p_{2}=p_{3}=0$ and $(-p_{1},0,0)$, $(0,-p_{1},0)$, $(0,0,-p_{1})$. Therefore $V = \{(p_{1},0,0)$, $(0,p_{1},0)$, $(0,0,p_{1}), (-p_{1},0,0)$, $(0,-p_{1},0)$, $(0,0,-p_{1})\}$ and $P$ is an octahedron. 

        If $(p_{1}, -p_{2}, -p_{3}) = (p_{2}, p_{3}, p_{1})$, then $(p_{1},-p_{1},-p_{1})\in V$ and $(p_{1},p_{1},p_{1})\in V$, thus, there is an orbit of length one, which is impossible in this case.

        If $(p_{1}, -p_{2}, -p_{3}) = (p_{3}, p_{1}, p_{2})$, then $(p_{1}, p_{1}, -p_{1}) \in V$ and $(-p_{1}, -p_{1}, -p_{1}) \in V$, thus, there is an orbit of length one, which is impossible in this case.

        The last case is $(p_{1}, -p_{2}, -p_{3}) \in q$-orbit. Using previous cases we assume that $(-p_{1}, -p_{2}, p_{3})$, $(-p_{1}, p_{2}, -p_{3}) \in q$-orbit and, what is more, that $q$-orbit consist of $\{(p_{1}, -p_{2}, -p_{3})$, $(-p_{1}, -p_{2}, p_{3})$, $(-p_{1}, p_{2}, -p_{3})\}$. Therefore for some choice of signs we have $(\pm p_{1}, \pm p_{1}, \pm p_{1})\in V$ and therefore for some choice of sign $\pm (p_{1}, p_{1}, p_{1})\in V$. Thus, there is an orbit of length one, which is impossible in this case.
    \end{proof}

    \begin{proof}[Equality case for convex bodies with tetrahedral symmetry]
        Equality can only be achieved if it holds in one of the Estimates \ref{Est1} or \ref{Est2}. We will consider the corresponding cases.

        If equality is achieved in Estimate \ref{Est1}, then it is also achieved in Estimate \ref{hexagon_est}. Therefore, $Pr_{(1,1,1)^{\perp}}K$ is a triangle. Also we have $\vert K\vert=\frac{1}{3}\vert \frac{1}{3a} (1,1,1)\vert \vert Pr_{(1,1,1)^{\perp}}K\vert$. Therefore, it follows from the proof of Lemma \ref{zang_lemma} that the Steiner symmetrization of $K$ with respect to $(1,1,1)^{\perp}$ is the convex hull of at most five points. Thus, $K$ is a polyhedron with at most five vertices. By Lemma \ref{56points}, $K$ is a tetrahedron.

        If equality is achieved in Estimate \ref{Est2}, then it is also achieved in Estimate \ref{square_est}. Therefore $Pr_{(0,0,1)^{\perp}}K$ is a parallelogram.  Also we have $\vert K\vert=\frac{1}{3} \vert \frac{1}{b}(0,0,1)\vert \vert Pr_{(0,0,1)^{\perp}}K\vert$. Therefore, it follows from the proof of Lemma \ref{zang_lemma} that the Steiner symmetrization of $K$ with respect to $(0,0,1)^{\perp}$ is the convex hull of at most six points. Thus, $K$ is a polyhedron with at most six vertices. By Lemma \ref{56points}, $K$ is a tetrahedron or an octahedron. Since the equality cannot be achieved in the case of an octahedron, the proof is complete.
 \end{proof}


\begin{thebibliography}{9}

    \bibitem{aliev}Aliev, A. The exact bound for the reverse isodiametric problem in 3-space. Rev. Real Acad. Cienc. Exactas Fis. Nat. Ser. A-Mat. 118, 111 (2024). https://doi.org/10.1007/s13398-024-01607-x
    \bibitem{zang} Zhang, G. Restricted chord projection and affine inequalities. Geom Dedicata 39, 213–222 (1991). https://doi.org/10.1007/BF00182294
    
    \bibitem{Egg}Eggleston HG. Note on a conjecture of L. A. Santaló. Mathematika. 1961;8(1):63-65. doi:10.1112/S002557930000214X
    
    \bibitem{lassak}Lassak, M.:  Approximation of Plane Convex Bodies By Centrally Symmetric Bodies. Journal of The London Mathematical Society-second Series s2-40. 369-377 (1989)
    
    \bibitem{geomtom}Gardner, R.J.: Geometric Tomography. 2nd ed. Cambridge University Press (2006)
    
    \bibitem{mahler3d}{H. Iriyeh, M. Shibata, Symmetric Mahler’s conjecture for the volume product in the three
    dimensional case, Duke Math. J. 169 (2020), 1077–1134.}
    
    \bibitem{mahlersym}{Iriyeh, H., Shibata, M. Minimal Volume Product of Three Dimensional Convex Bodies with Various Discrete Symmetries. Discrete Comput Geom 68, 738–773 (2022). }

    \bibitem{mahlernonsym}Iriyeh, H., Shibata, M.,Minimal volume product of three-dimensional convex bodies invariant under certain groups of order four,
Advances in Mathematics,
Volume 482, Part A,
2025,
110569,
ISSN 0001-8708,
    
    \bibitem{volprodstability}{K. J. Böröczky, E. Makai Jr., M. Meyer, and S. Reisner, On the volume product of planar
    polar convex bodies—lower estimates with stability, Studia Sci. Math. Hungar. 50 (2013),
    no. 2, 159–198, DOI 10.1556/SScMath.50.2013.2.1235.}
    
    \bibitem{diff}{Rogers, C. A. and Shephard, G. C. “The difference body of a convex body.” Arch. Math. 8
    (1957): 220–233.}
    
    
    \bibitem{Mak2016}  Makai, E., Martini, H.: Density estimates for $k$-impassable lattices of balls and general convex bodies in
    $\mathbb{R}^{n}$ (2016). \url{https://arxiv.org/abs/1612.01307}
    
    \bibitem{shimura} Merino, B.G., Schymura, M. On Densities of Lattice Arrangements Intersecting Every i-Dimensional Affine Subspace. Discrete Comput Geom 58, 663–685 (2017). https://doi.org/10.1007/s00454-017-9911-x
    

    \bibitem{bruhn}  Schneider, R.: Convex bodies: the Brunn-Minkowski theory, Second expanded edition, Encyclopedia of Mathematics and its Applications, vol. 151, Cambridge University Press, Cambridge (2014)

    \bibitem{ma1} K. Mahler, Ein Minimalproblem für konvexe Polygone, Mathematica (Zutphen) B 7
(1939), 118–127 (German).

    \bibitem{ma2}K. Mahler, "Ein Übertragungsprinzip für konvexe Körper." Časopis pro pěstování matematiky a fysiky 068.3-4 (1939): 93-102.
    \bibitem{petrov}F. Nazarov, F. Petrov, D. Ryabogin, and A. Zvavitch, A remark on the Mahler conjecture: local minimality of the unit cube, Duke Math. J. 154 (2010), no. 3, 419–430, DOI
10.1215/00127094-2010-042.

    \bibitem{symmahl}J. Saint-Raymond, Sur le volume des corps convexes symétriques, Initiation Seminar on Analysis: G. Choquet-M. Rogalski-J. Saint-Raymond, 20th Year: 1980/1981, Publ. Math. Univ. Pierre et Marie Curie, vol. 46, Univ. Paris VI, Paris, 1981, pp. Exp. No. 11, 25 (French).

    \bibitem{zonoids}Reisner, S. Zonoids with minimal volume-product. Math Z 192, 339–346 (1986). https://doi.org/10.1007/BF01164009

    \bibitem{kup} Kuperberg, G. From the Mahler conjecture to Gauss linking integrals, Geom. Funct.
Anal. 18 (2008), no. 3, 870–892.

    \bibitem{Mak-1978}Makai, E.: On the thinnest non-separable lattice of convex bodies. Studia Sci. Math. Hungar. 13, 19–27 (1978)
    
    \bibitem{Mak2016}  Makai, E., Martini, H.: Density estimates for $k$-impassable lattices of balls and general convex bodies in
$\mathbb{R}^{n}$ (2016). \url{https://arxiv.org/abs/1612.01307} 

    \bibitem{Mak-1974}Makai, E., T\'oth, L.F.: On the thinnest non-separable lattice of convex plates. Studia Sci. Math. Hungar. 9, 191 (1974)



\end{thebibliography}
\end{document}